\newcommand{\LP}{{\sf LP}}
\newcommand{\FOLP}{{\sf FOLP}}
\newcommand{\M}{{\mathcal M}}
\newcommand{\E}{{\mathcal E}}
\newcommand{\I}{{\mathcal I}}
\newcommand{\D}{{\mathcal D}}
\newcommand{\CS}{{\sf CS}}
\newcommand{\uu}{{\sf u}}
\newcommand{\vv}{{\sf v}}
\newcommand{\ww}{{\sf w}}
 \def\r{\rightarrow}
\def\di{\displaystyle}
\begin{document}
\title{Tableaux for First Order Logic of Proofs}

\author{Meghdad Ghari}
\institute{School of Mathematics,
Institute for Research in Fundamental Sciences (IPM), \\ P.O.Box: 19395-5746, Tehran, Iran \\ \email{ghari@ipm.ir}
}

\maketitle
\begin{abstract}
In this paper we present a tableau proof system for first order logic of proofs $\FOLP$. We show that the tableau system is sound and complete with respect to Mkrtychev models of $\FOLP$. 
\end{abstract}

\section{Introduction}

Artemov in \cite{A1995,A2001} introduced the first propositional justification logic $\LP$, the Logic of Proofs (for more information about justification logics
see \cite{A2008,ArtemovFitting}). Later Artemov and Yavorskaya (Sidon) introdiced in \cite{ArtemovSidon2011} the first order logic of proofs $\FOLP$.  The language of $\FOLP$ extends the language of first order logic by justification terms and expressions of the form $t:_X A$, where $X$ is a set of individual variables. The intended meaning of $t:_X A$ is ``$t$ justifies $A$ in which the variables in $X$ can be substituted for and cannot be quantified.'' Fitting in \cite{Fitting2011,Fitting2014} proposed possible world semantics and Mkrtychev semantics for $\FOLP$.

Various tableau proof systems have been developed for the logic of proofs (see \cite{Finger2010,Fitting2005,Ghari-tableaux-2014,Renne2004,Renne2006}).  The aim of this paper is to present a tableau proof system for $\FOLP$. Our tableau rules are extensions of Renne's tableau rules \cite{Renne2004} for $\LP$. We show that our tableau proof system is sound and complete with respect to Mkrtychev models of $\FOLP$.

\section{The logic $\FOLP$}\label{sec:FOLP}
The language of $\FOLP$ is an
extension of the language of first order logic by expressions 
of the form $t:_X A$, where $A$ is a formula, $t$ is a
justification term and $X$ is a set of individual variables. Following \cite{ArtemovSidon2011} we consider a first order language in which there are no constant symbols, function symbols, and identity, but of course a countable set of individual variables $Var$ (denoted by $x, y, z, \ldots$). 

\textit{Justification terms}  are built up from a countable set of justification variables $JVar$ and a countable set of justification constants $JCons$ by the following grammar:
\[ t::= p~|~c~|~t+t~|~t \cdot t~|~!t~|~gen_x(t),\]

where $p\in JVar$, $c \in JCons$, and $x \in Var$.
$\FOLP$ formulas are constructed from a countable set of predicate symbols of any arity by the following grammar:
\[ A::= Q(x_1,\ldots,x_n)~|~\neg A~|~A\rightarrow A~|~\forall x A~|~\exists x A~|~ t:_X A,\]

where $Q$ is an $n$-place predicate symbol, $t$ is a justification term, and $X \subseteq Var$. 

Free individual variable occurrences in formulas are defined as in the first order logic, with the following addition: the free individual variable occurrences in $t:_X A$ are  the free individual variable occurrences in $A$, provided the variables also occur in $X$, together with all variable occurrences in $X$ itself. The set of all free individual variables of the formula $A$ is denoted by $FVar(A)$. Thus $FVar(t:_X A) = X$. The universal closure of a formula $A$ will be denoted by $\forall A$. 
The notion of substitution of an individual variable for another individual variable is defined as in the first order logic.

If $y$ is an individual variable, then $Xy$ is short for $X \cup \{y\}$, and in addition it means $y \not\in X$.

\begin{definition}\label{def: FOLP axiom system}
Axioms schemes and rules of $\FOLP$ are:\footnote{Ctr and Exp are abbreviations for Contraction and Expansion respectively. The rule AN is called Axiom Necessitation.}

\begin{description}
\item[FOL.] Axiom schemes of first order logic,

\item[Ctr.] $t:_{Xy} A \rightarrow t:_X A$, provided $y\not\in FVar(A)$. 

\item[Exp.] $t:_{X} A \rightarrow t:_{Xy} A$.

\item[Sum.] $s:_X A\rightarrow (s+t):_X A~,~s:_X A\rightarrow (t+s):_X A$.

\item[jK.] $s:_X (A\rightarrow B)\rightarrow(t:_X A\rightarrow (s\cdot t):_X B)$.

\item[jT.] $t:_X A\rightarrow A$.

\item[j4.] $t:_X A\rightarrow !t:_X t:_X A$.

\item[Gen.] $t:_X A \rightarrow gen_x(t):_X \forall x A$, provided $x\not\in X$.

\item[MP.] From $\vdash A$ and $\vdash A \rightarrow B$ infer $\vdash B$.

\item[UG.] From $\vdash A$ infer $\vdash \forall x A$.

\item[AN.]
$\vdash c:A$, where $A$ is an axiom instance and $c$ is an arbitrary justification constant.
\end{description}
\end{definition}

 \begin{definition}\mbox{}
 \begin{enumerate}
\item A \textit{constant specification} $\CS$
for $\FOLP$ is a set of formulas of the form
$c:A$, where $c$ is a
justification constant and $A$ is an axiom instance of $\FOLP$.
\item A constant specification $\CS$ is axiomatically appropriate if  for every axiom instance $A$ there is a justification constant $c$ such that $c:A \in \CS$.
\item Two formulas are variable variants if each can be turned into the other by a  renaming of free and bound individual variables.
\item A constant specification $\CS$ is variant closed provided that whenever $A$ and $B$ are variable variants, $c:A \in \CS$ if and only if $c:B \in\CS$.
\end{enumerate}

\end{definition}

Let $\FOLP_\CS$ be the fragment of $\FOLP$ where the   Axiom Necessitation rule only produces formulas from the given $\CS$.

In the remaining of this section, we recall the definition of Mkrtychev models for $\FOLP$ from \cite{Fitting2014} (Mkrtychev models was first introduced for $\LP$ in \cite{Mkrtychev1997}). First we need the following auxiliary definition. 

\begin{definition}\label{def:K-formulas}
Let $K$ be a non-empty set. 

\begin{enumerate}
\item A $K$-formula is the result of substituting some free individual variables in an $\FOLP$ formula with members of $K$. 

\item A $K$-formula is closed if it contains no free occurrences of individual variables. 

\item For a $K$-formula $A$, let $K(A)$ be the set of all members of $K$ that occur in $A$. 

\item For a formula $F(\vec{x})$ and $\vec{a}\in K$, by  $F(\vec{a})$ we mean that all free occurrences of the individual variables in $\vec{x}$ have been replaced with corresponding occurrences in $\vec{a}$. This is sometimes denoted by $F \{\vec{x} / \vec{a} \}$.
\end{enumerate}

\end{definition}


\begin{definition}\label{def:FOLP-models}
A Mkrtychev model $\M=(\D,\I,\E)$ for $\FOLP_\CS$ (or $\FOLP_\CS$-model, for short) consists of:
\begin{itemize}
\item A non-empty set $\D$, called the domain of the model. The definitions of (closed) $\D$-formulas and $\D(A)$, for a $\D$-formula $A$, are similar to Definition \ref{def:K-formulas}, where $K$ is replaced by $\D$.

\item The interpretation $\I$ assigns to each $n$-place predicate symbol some $n$-ary relation on $\D$.

\item The admissible evidence function $\E$ assigns to each justification term a set of $\D$-formulas meeting the following conditions:

 \begin{description}
 \item[$\E 1.$]  $c:A\in\CS$ implies $A\in\E(c)$.
 
 \item[$\E 2.$]  $A\r B\in\E(s)$ and $A\in\E(t)$ implies $B\in\E(s\cdot t)$.
 
  \item[$\E 3.$]  $\E(s)\cup \E(t)\subseteq\E(s+t)$.
  
   \item[$\E 4.$] $A\in\E(t)$ implies $t:_X A\in\E(!t)$, where $\D(A) \subseteq X \subseteq \D$.
  
 \item[$\E 5.$] $A\in \E(t)$ implies $\forall x A \in \E(gen_x(t))$.
 
 \item[$\E 6.$]  $a \in \D$ and $A(x) \in \E(t)$ implies $A(a) \in \E(t)$.
  \end{description}
  \end{itemize}
  \end{definition}

Condition $\E6$ is called the Instantiation Condition in \cite{Fitting2014}.

\begin{definition}\label{def:forcing relation}
For an $\FOLP_\CS$-model $\M=(\D,\I,\E)$ and a closed $\D$-formula we define when the formula is true in $\M$ as follows:
\begin{enumerate}
\item $\M\Vdash Q(\vec{a})$ if{f} $\vec{a} \in \I(Q)$, for $n$-place predicate symbol $Q$ and $\vec{a} \in \D$.
 \item $\M\Vdash \neg A$ if{f} $\M\not\Vdash A$.
  \item $\M\Vdash A\r B$ if{f} $\M\not\Vdash A$ or $\M\Vdash B$.
  \item $\M\Vdash \forall x A(x)$ if{f} $\M \Vdash A(a)$ for every $a \in \D$.
   \item $\M\Vdash \exists x A(x)$ if{f} $\M \Vdash A(a)$ for some $a \in \D$.
 \item $\M\Vdash t:_X A$ if{f} $A\in\E(t)$ and $\M \Vdash \forall A$.
\end{enumerate}
If $\M\Vdash F$ then it is said that $F$ is true in $\M$ or $\M$ satisfies $F$.
\end{definition}

A sentence $F$ is $\FOLP_\CS$-valid if it is true in every $\FOLP_\CS$-model. For a set $S$ of sentences, $\M\Vdash S$ provided that $\M\Vdash F$ for all formulas $F$ in $S$. Note that given a constant specification $\CS$ for $\FOLP$, and a model $\M$ of $\FOLP_\CS$ we have $\M\Vdash \CS$ (in this case it is said that $\M$ respects $\CS$).

The proof of soundness and completeness theorems of $\FOLP$ are given in \cite{Fitting2014}.

\begin{theorem}\label{Sound Compl JL}
Let  $\CS$ be an axiomatically appropriate and variant closed constant specification for $\FOLP$. Then a sentence $F$ is provable in  $\FOLP_\CS$ if{f} $F$ is $\FOLP_\CS$-valid.
\end{theorem}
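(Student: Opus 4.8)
The plan is to establish the two directions separately: soundness (``only if'') by induction on $\FOLP_\CS$-derivations, and completeness (``if'') by a canonical-model construction from maximal consistent sets. For soundness I would prove, by induction on the length of a derivation, that $\M\Vdash\forall A$ for every theorem $A$ of $\FOLP_\CS$ and every $\FOLP_\CS$-model $\M$; since UG is a rule this yields validity of theorems as stated, and for a sentence $F$ one has $\forall F = F$. It then suffices to check that each axiom scheme has valid universal closure and that MP, UG and AN preserve this. The first-order axioms and MP, UG are treated exactly as in classical first-order logic. Each justification axiom is matched against a closure condition on $\E$ from Definition \ref{def:FOLP-models}: jK uses $\E2$, Sum uses $\E3$, j4 uses $\E4$ (the side condition $\D(A)\subseteq X$ there supplying the required variable set), Gen uses $\E5$, and jT follows directly from clauses (4) and (6) of Definition \ref{def:forcing relation}. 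For Ctr and Exp the key observation is that in clause (6) the conditions $A\in\E(t)$ and $\M\Vdash\forall A$ do not mention $X$: together with the side condition $y\notin FVar(A)$ this makes the antecedent and consequent of Ctr equisatisfiable, while for Exp one additionally invokes $\E6$ to pass to the instance in which the freshly bound variable is instantiated. Finally AN is valid because an axiom instance $A$ has valid universal closure and $\E1$ forces $A\in\E(c)$ whenever $c:A\in\CS$, so $\M\Vdash c:A$.

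For completeness I would build a canonical Mkrtychev model from a maximal consistent set. Call a set of sentences $\FOLP_\CS$-consistent if no formula together with its negation is derivable from it. Given a sentence $F$ not provable in $\FOLP_\CS$, the set $\{\neg F\}$ is consistent, and I would extend it, by a Lindenbaum argument that simultaneously adds Henkin witnesses, to a maximal consistent set $\Gamma$ that is $\exists$-complete: for each $\exists x\,A(x)\in\Gamma$ there is an individual variable $a$ with $A(a)\in\Gamma$. Since $\FOLP$ has no constant or function symbols, the witnesses must come from $Var$ itself; as each formula uses only finitely many variables and $Var$ is countably infinite, one can reserve infinitely many variables for this role (working, if convenient, in an expanded supply of variables and transferring back at the end). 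From $\Gamma$ define $\M^\Gamma=(\D,\I,\E)$ by taking $\D$ to be the set of (witness-augmented) individual variables, $\I(Q)=\{\vec a : Q(\vec a)\in\Gamma\}$, and $\E(t)=\{A : t:_X A\in\Gamma \text{ for a suitable } X\}$, with $X$ chosen to contain the displayed free variables of $A$ so that clause (6) will match membership in $\Gamma$.

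The heart of the proof is then (i) verifying that $\E$ satisfies $\E1$--$\E6$ and (ii) proving a Truth Lemma, $\M^\Gamma\Vdash G \iff G\in\Gamma$ for all closed $\D$-sentences $G$, by induction on $G$. For (i) each condition reflects a rule or axiom: $\E1$ from AN and maximality (axiomatic appropriateness of $\CS$ guaranteeing enough constants), $\E2$ from jK, $\E3$ from Sum, $\E4$ from j4, $\E5$ from Gen together with closure of $\Gamma$ under UG, and the Instantiation Condition $\E6$ from the derivable substitution behaviour of $t:_X$ combined with Ctr and Exp. For (ii) the propositional and quantifier cases are routine (the $\exists$ case using $\exists$-completeness, the $\forall$ case using maximality and UG); the only delicate case is $G=t:_X A$, where one must show $t:_X A\in\Gamma$ iff $A\in\E(t)$ and $\forall A\in\Gamma$, and it is exactly here that Ctr, Exp, the definition of $\E$, and the hypothesis that $\CS$ is \emph{variant closed} (so that the renamings forced by the Henkin substitutions do not disturb $\CS$) are all used. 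Once the Truth Lemma is available we are done: $\neg F\in\Gamma$ gives $F\notin\Gamma$, hence $\M^\Gamma\not\Vdash F$, so $F$ is not $\FOLP_\CS$-valid.

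I expect the main obstacle to lie in the completeness half, and specifically in the interaction between the Henkin construction and the variable-sensitivity of the operator $t:_X$: because the Henkin witnesses are themselves individual variables, renaming must be controlled with care, and it is in the $t:_X A$ clause of the Truth Lemma, as well as in the verification of $\E6$, that variant-closedness and axiomatic appropriateness of $\CS$ are genuinely needed rather than merely convenient. Soundness, by contrast, is essentially a checklist against the conditions $\E1$--$\E6$.
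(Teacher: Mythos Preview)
The paper does not prove this theorem at all: immediately before the statement it says ``The proof of soundness and completeness theorems of $\FOLP$ are given in \cite{Fitting2014},'' and the theorem is simply stated and attributed to Fitting. Your proposal, by contrast, sketches a direct proof via induction on derivations for soundness and a Henkin-style canonical model for completeness; this is almost certainly the shape of Fitting's argument, so you are reconstructing the cited proof rather than diverging from anything the present paper does.

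A few small remarks on the sketch itself. First, your handling of Exp is slightly off: since clause~(6) of Definition~\ref{def:forcing relation} makes $\M\Vdash t:_X A$ depend only on $A\in\E(t)$ and $\M\Vdash\forall A$, not on $X$, both Ctr and Exp are immediate once one has substituted domain elements for the free variables; no appeal to $\E6$ is needed for Exp. Second, your canonical evidence function ``$\E(t)=\{A:t:_X A\in\Gamma\text{ for a suitable }X\}$'' needs to be pinned down (the natural choice is $X=FVar(A)$, or equivalently $X=\D(A)$ after substitution), and you should check that the Gen side condition $x\notin X$ is met when verifying $\E5$; this is where Ctr is used to shrink $X$ first. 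Third, your identification of where axiomatic appropriateness and variant-closedness of $\CS$ are actually used is on target: the former ensures enough constants are available for the Lindenbaum construction to go through (in particular for deriving instances of axioms needed during saturation), and the latter lets you rename witness variables without falling outside $\CS$.
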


\section{Tableaux}

Tableau proof systems for the logic of proofs are given in \cite{Fitting2005, Renne2004, Renne2006}. In this section we extend them and present  tableaux for $\FOLP$.

Let  $Par$ be a denumerable set of new individual variables, i.e. $Par \cap Var = \emptyset$. The members of $Par$ are called \textit{parameters}, with typical members denoted  $\uu, \vv, \ww$. Parameters are never quantified. The definitions of (closed) $Par$-formulas, $Par$-instance of a formula, and $Par(A)$, for a $Par$-formula $A$, are similar to Definition \ref{def:K-formulas}, where $K$ is replaced by $Par$. Notice that closed $Par$-formulas may contain free parameters but do not contain free individual variables. 

Tableau proofs will be of sentences of $\FOLP$ but will use closed $Par$-formulas. An $\FOLP_\CS$-tableau for a sentence is a binary tree labeled by closed $Par$-formulas with the negation of that sentence at the root constructed by applying $\FOLP$ tableau rules from Table \ref{table:tableau rules FOLP}.  An $\FOLP_\CS$-tableau branch closes if one of the following holds:

\begin{enumerate}
\item Both $A$ and $\neg A$ occurs in the branch, for some closed $Par$-formula $A$.

\item $\neg c: A$ occurs in the branch, where $c:A\in\CS$.

\end{enumerate}

A tableau closes if all branches of the tableau close. An $\FOLP_\CS$-tableau proof for a sentence $F$ is a closed tableau beginning with $\neg F$ (the root of the tableau) using only $\FOLP$ tableau rules. An $\FOLP_\CS$-tableau for a finite set $S$ of closed $Par$-formulas begins with a single branch whose nodes consist of the formulas of $S$ as roots.

\begin{table}[ht]
\centering\renewcommand{\arraystretch}{1.2}
\begin{tabular}{|lc|}
\hline
~First order logic rules: &\\
\hline

\multicolumn{2}{|c|}{
\AXC{}\noLine 
\UIC{$\neg\neg A$}\RightLabel{$(F\neg)$}
\UIC{$A$}\noLine
\UIC{}
\DP
}\\\hline

\AXC{$A\rightarrow B$}\RightLabel{$(T\r)$}
\UIC{$\neg A | B$}
\DP
&
\AXC{}\noLine 
\UIC{$\neg(A\rightarrow B)$}\RightLabel{$(F\r)$}
\UIC{$A$}\noLine
\UIC{$\neg B$}\noLine
\UIC{}
\DisplayProof

\\ \hline
\AXC{}\noLine 
\UIC{$\forall x A(x)$}\RightLabel{$(T\forall)$}
\UIC{$A(\uu)$}\noLine
\UIC{}
\DisplayProof
&\AXC{$\neg \exists x A(x)$}\RightLabel{$(F\exists)$}
\UIC{$\neg A(\uu)$}
\DP
\\
~$\uu$ is any parameter
&
~$\uu$ is any parameter
\\\hline

\AXC{}\noLine 
\UIC{$\exists x A(x)$}\RightLabel{$(T\exists)$}
\UIC{$A(\uu)$}\noLine
\UIC{}
\DisplayProof
&\AXC{$\neg\forall x A(x)$}\RightLabel{$(F\forall)$}
\UIC{$\neg A(\uu)$}
\DP
\\
~$\uu$ is a new parameter
&
$\uu$ is a new parameter
\\ \hline
~Justification logic rules:&\\\hline
 \AXC{$t:_X A$} \RightLabel{$(T:)$}
 \UIC{$\forall A$}
 \DP
 &
 \AXC{}\noLine 
\UIC{$\neg t+s:_X A$}\RightLabel{$(F+)$}
\UIC{$\neg t:_X A$}\noLine
\UIC{$\neg s:_X A$}\noLine
\UIC{}
\DisplayProof
\\
\hline
\AXC{}\noLine 
\UIC{$\neg s\cdot t:_X B$}
\RightLabel{$(F\cdot)$}
\UIC{$\neg s:_X (A\rightarrow B) | \neg t:_X A$}\noLine
\UIC{$Par(A)\subseteq X$}\noLine
\UIC{}
\DisplayProof

&\AXC{}\noLine 
\UIC{$\neg !t:_X t:_X A$} \RightLabel{$(F!)$}
 \UIC{$\neg t:_X A$}
 \noLine
\UIC{}
 \DP
\\ \hline
\AXC{}\noLine
\UIC{$\neg t:_{X} A$} \RightLabel{$(Ctr)$}
 \UIC{$\neg t:_{X\uu} A$}
 \noLine
\UIC{}
 \DP
 &
 \AXC{}\noLine 
\UIC{$\neg t:_{X\uu} A$} \RightLabel{$(Exp)$} 
\UIC{$\neg t:_{X} A$}\noLine
 \UIC{$\uu\not\in Par(A)$}\noLine
\UIC{}
\DP
\\\hline
\AXC{}\noLine
\UIC{$\neg t:_X A(\uu)$} \RightLabel{$(Ins)$}
 \UIC{$\neg t:_X A(x)$}\noLine
 \UIC{}
 \DP
 &
 \AXC{$\neg gen_x(t):_X \forall x A $} \RightLabel{$(gen_x)$} 
\UIC{$\neg t:_X A$}
\DP
\\
\hline
\multicolumn{2}{|l|}{In all justification logic rules $X \subseteq Par$.}
\\
\hline
\end{tabular}\vspace{0.3cm}
 \caption{Tableau rules for $\FOLP$.}\label{table:tableau rules FOLP}
\end{table}

\begin{example} 
We give an $\FOLP_\CS$-tableau proof of the sentence 

\[p: \forall x A(x) \r \forall x (c \cdot p):_{\{x\}} A(x),\]
 where $p \in JVar$ and $\CS$ contains $c:(\forall x A(x)\r  A(x))$. An axiomatic proof of this sentence is given in \cite{ArtemovSidon2011}. This sentence is an explicit counterpart of the Converse Barcan Formula $\Box \forall x A(x) \r \forall x \Box A(x)$.
 
\vspace*{0.2cm}
\Tree [.$1.~\neg(p:\forall xA(x)\r \forall x(c\cdot p):_{\{x\}}A(x))$ 
[.$2.~p:\forall xA(x)$ 
[.$3.~\neg \forall x(c\cdot p):_{\{x\}}A(x)$ 
[.$4.~\neg (c\cdot p):_{\{\uu\}}A(\uu)$     [.$5.~\neg c:_{\{\uu\}}(\forall xA(x)\r A(\uu))$ 
[.$7.~\neg c:_{\{u\}}(\forall xA(x)\r A(x))$ {$9.~\neg c:(\forall xA(x)\r A(x))$ \\ $\otimes$} ] ] 
!\qsetw{5cm} 
[.$6.~\neg p:_{\{\uu\}}\forall xA(x)$ {$8.~\neg p:\forall xA(x)$ \\ $\otimes$} ] ]  ] ]  ]
\vspace*{0.2cm}

Formulas 2 and 3 are from 1 by rule $(F\r)$, 4 is from 3 by rule $(F\forall)$, where $\uu$ is a new parameter, 5 and 6 are from 4  by rule $(F\cdot)$, 7 is from 5 by rule $(Ins)$, and 8 and 9 are from 6 and 7, respectively, by rule $(Exp)$. 
\end{example}

Let us now show the soundness of $\FOLP$ tableau system.

\begin{definition}\label{def:Par-satisfiable}
A $Par$-formula $A(\uu_1, \ldots, \uu_n)$, where $\uu_1, \ldots, \uu_n$ are all parameters of $A$, is satisfiable in an $\FOLP_\CS$-model $\M = (\D, \I, \E)$, denoted by \linebreak $\M \Vdash A(\uu_1, \ldots, \uu_n)$, if $\M \Vdash A(a_1, \ldots, a_n)$ for some $a_1, \ldots, a_n \in \D$. A tableau branch is satisfiable in a model $\M$ if every formula of the branch is satisfiable.  
\end{definition}

\begin{lemma}\label{lem: soundness lemma}
Let $\pi$ be any branch of an $\FOLP_\CS$-tableau and $\M$ be an $\FOLP_\CS$-model that satisfies all the formulas occur in $\pi$. If an $\FOLP$ tableau rule is applied to $\pi$, then it produces at least one extension $\pi'$ such that $\M$ satisfies all the formulas occur in $\pi'$.
\end{lemma}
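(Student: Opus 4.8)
The plan is to prove Lemma~\ref{lem: soundness lemma} by a case analysis on which tableau rule from Table~\ref{table:tableau rules FOLP} is applied to the branch $\pi$. Fix an $\FOLP_\CS$-model $\M = (\D, \I, \E)$ satisfying every formula occurring in $\pi$, in the sense of Definition~\ref{def:Par-satisfiable}. Note first a small preliminary observation that I would isolate and use repeatedly: since a closed $Par$-formula $A(\uu_1,\dots,\uu_n)$ is satisfiable in $\M$ iff $\M \Vdash A(a_1,\dots,a_n)$ for some $\vec a \in \D$, and since distinct parameters range independently, the parameters behave essentially like free first order variables under an assignment; in particular, when a rule shares parameters between premise and conclusion, the \emph{same} witnessing tuple $\vec a$ can be reused, and when a rule introduces a genuinely new parameter $\uu$ (rules $(T\exists)$ and $(F\forall)$), the branch $\pi$ contains no constraint on $\uu$, so we are free to pick any convenient value for it.

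For the propositional rules $(F\neg)$, $(T\r)$, $(F\r)$ and for $(F+)$, the argument is the routine one, transferred through the parameter assignment: instantiate all formulas of the premise by a common witnessing tuple $\vec a$, apply clauses 2, 3 of Definition~\ref{def:forcing relation} for the Boolean rules, and for $(F+)$ use condition $\E3$ together with clause 6 of the forcing definition --- if $\M \not\Vdash (t+s):_X A(\vec a)$ then either $A(\vec a) \notin \E(t+s)$, which by $\E3$ forces $A(\vec a)\notin\E(t)$ and $A(\vec a)\notin\E(s)$, hence both $\neg t:_X A$ and $\neg s:_X A$ are satisfied, or $\M\not\Vdash \forall(A(\vec a))$, which again makes both conclusions true. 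The quantifier rules $(T\forall)$, $(F\exists)$ use clause 4 (resp.\ 5) of the forcing definition: the premise being satisfiable gives $\M\Vdash \forall x A(x)$ for some assignment to the other parameters, hence $\M \Vdash A(a)$ for every $a\in\D$, so $A(\uu)$ is satisfiable for any parameter $\uu$. The rules $(T\exists)$, $(F\forall)$ use the same clauses in the existential direction and here the newness of $\uu$ is exactly what lets us pick the witness $a$ delivered by clause 5 (resp.\ the negated clause 4) as the value of $\uu$.

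The justification-logic rules are the substantive cases and form the main obstacle. For $(T:)$: satisfiability of $t:_X A$ in $\M$ means, via clause 6, that $A\in\E(t)$ and $\M\Vdash\forall A$ (after instantiating the parameters in $X$ by some tuple), and $\M\Vdash\forall A$ is exactly what is needed to conclude that $\forall A$ is satisfiable. For $(F\cdot)$: if $\M\not\Vdash (s\cdot t):_X B(\vec a)$, then by clause 6 either $B(\vec a)\notin\E(s\cdot t)$ or $\M\not\Vdash\forall(B(\vec a))$; in the second subcase $\neg s:_X(A\r B)$ is satisfied because $\M\not\Vdash\forall(A\r B)$ fails only if $\M\Vdash\forall(A\r B)$, and then combined with $\M\not\Vdash\forall B$ we would get $\M\not\Vdash\forall A$, so $\neg t:_X A$ holds --- the side condition $Par(A)\subseteq X$ is what guarantees $A$ is a legitimate $K$-formula over $X$ so that $t:_X A$ is even well-formed; in the first subcase the contrapositive of $\E2$ gives $A\r B\notin\E(s)$ or $A\notin\E(t)$, hence one of the two conclusions holds. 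For $(F!)$, $(F+)$-style reasoning with $\E4$: if $\M\not\Vdash !t:_X t:_X A$ then either $t:_X A\notin\E(!t)$, and $\E4$ (contrapositive, using $\D(A)\subseteq X$) forces $A\notin\E(t)$, or $\M\not\Vdash\forall(t:_X A)$, which via clause 6 again yields $A\notin\E(t)$ or $\M\not\Vdash\forall A$ --- either way $\neg t:_X A$ is satisfied. For $(Ctr)$, $(Exp)$ I would unwind clause 6 and compare $\forall A$ against the quantifier-closure over the slightly larger or smaller variable set, using the side condition $\uu\notin Par(A)$ (resp.\ the fact that $y\notin FVar(A)$ in the axiom Ctr) so that adding/removing $\uu$ from the index $X$ does not change whether $A\in\E(t)$ or whether the closure is true; here condition $\E6$, the Instantiation Condition, is the tool that lets us move members of $\D$ in and out of the relevant position. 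For $(Ins)$: $\M\not\Vdash t:_X A(\uu)$ for the witnessing value $a$ of $\uu$ means $A(a)\notin\E(t)$ or $\M\not\Vdash\forall(A(a))$; by the contrapositive of $\E6$, $A(a)\notin\E(t)$ gives $A(x)\notin\E(t)$, and $\M\not\Vdash A(a)$ for one $a$ gives $\M\not\Vdash\forall x A(x)$, so $\neg t:_X A(x)$ is satisfied. For $(gen_x)$: if $\M\not\Vdash gen_x(t):_X\forall x A$, then $\forall x A\notin\E(gen_x(t))$, and the contrapositive of $\E5$ yields $A\notin\E(t)$, or else $\M\not\Vdash\forall(\forall x A)$, which is $\M\not\Vdash\forall A$; in both cases $\neg t:_X A$ is satisfied. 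Throughout, the recurring technical point --- and the thing to be careful about --- is the interaction between the parameter-assignment bookkeeping of Definition~\ref{def:Par-satisfiable} and the universal-closure clause 6 of Definition~\ref{def:forcing relation}: one must check that the indexing sets $X\subseteq Par$ are handled consistently when passing a witnessing tuple through a rule, and that the evidence-function conditions $\E1$--$\E6$ are invoked in their correct (usually contrapositive) form.
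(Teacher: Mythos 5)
Your proposal is correct and follows essentially the same route as the paper's proof: a rule-by-rule case analysis that instantiates the parameters of each premise by a witnessing tuple from $\D$, transfers the truth conditions of Definition~\ref{def:forcing relation}, and invokes the evidence conditions $\E 2$--$\E 6$ in contrapositive form. The only (harmless) deviations are that in the $(F\cdot)$ case you split on $\forall(A\to B)$ where the paper splits on $\forall A$, and that you invoke $\E 6$ for $(Ctr)$/$(Exp)$ although it is not needed there, since the subscript set $X$ does not enter the truth condition for $t:_X A$.
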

\begin{proof}
Suppose that a tableau branch $\pi$ is satisfiable in the model $\M=(\D,\I,\E)$, and $\pi'$ is obtained by applying an $\FOLP$ tableau rule to $\pi$. To prove the lemma, we consider each rule in turn. The cases for the propositional logic rules are standard. Hence, we need consider only the rules for quantifiers and $\FOLP$ rules. 

Suppose that the rule $(T\forall)$ is applied
\[
\AXC{$\forall x A(x, \vec{\ww})$}\RightLabel{$(T\forall)$}
\UIC{$A(\uu, \vec{\ww})$}
\DisplayProof
\]
where $\uu,\vec{\ww} \in Par$. Since $\M \Vdash \forall x A(x, \vec{\ww})$, we have $\M \Vdash  \forall x A(x, \vec{b})$ for some $\vec{b} \in \D$. Thus, $\M \Vdash  A(a, \vec{b})$ for every $a \in \D$. Then, obviously $\M \Vdash  A(\uu, \vec{\ww})$. Hence $\M \Vdash \pi'$ as desired. The case of the rule $(F\exists)$ is similar.

Suppose that  the rule $(T\exists)$ is applied
\[
\AXC{$\exists x A(x, \vec{\ww})$}\RightLabel{$(T\exists)$}
\UIC{$A(\uu, \vec{\ww})$}
\DisplayProof
\]
where $\vec{\ww} \in Par$ and $\uu$ is a new parameter in $\pi$. Since $\M \Vdash \exists x A(x, \vec{\ww})$, we have $\M \Vdash  \exists x A(x,\vec{b})$  for some $\vec{b} \in \D$. Thus, $\M \Vdash  A(a, \vec{b})$  for some $a \in \D$. Then, obviously $\M \Vdash  A(\uu, \vec{\ww})$. Hence $\M' \Vdash \pi'$ as desired. The case of the rule $(T\forall)$ is similar.

Suppose that the rule $(T:)$ is applied
\[
\AXC{$t:_{\{\vec{\ww}, \vec{\vv}\}} A(\vec{\ww},\vec{x})$} \RightLabel{$(T:)$} 
\UIC{$\forall \vec{x} A(\vec{\ww},\vec{x})$}
\DP
\]
Since $\M \Vdash t:_{\{\vec{\ww}, \vec{\vv}\}} A(\vec{\ww},\vec{x})$, we have $\M \Vdash t:_{\{\vec{a}, \vec{b}\}} A(\vec{a},\vec{x})$ for some $\vec{a}, \vec{b} \in \D$. Thus $\M \Vdash \forall \vec{x} A(\vec{a},\vec{x})$. Then, obviously $\M \Vdash \forall \vec{x} A(\vec{\ww},\vec{x})$. Hence $\M \Vdash \pi'$ as desired. 

Suppose that  the rule $(F+)$ is applied
\[
\AXC{$\neg t+s:_{\{\vec{\ww}, \vec{\vv}\}} A(\vec{\ww},\vec{x})$}\RightLabel{$(F+)$} 
\UIC{$\neg t:_{\{\vec{\ww}, \vec{\vv}\}} A(\vec{\ww},\vec{x})$}\noLine
\UIC{$\neg s:_{\{\vec{\ww}, \vec{\vv}\}} A(\vec{\ww},\vec{x})$}
\DisplayProof
\]
Since $\M  \Vdash \neg  t+s:_{\{\vec{\ww}, \vec{\vv}\}} A(\vec{\ww},\vec{x})$, we have $\M  \Vdash \neg  t+s:_{\{\vec{a}, \vec{b}\}} A(\vec{a},\vec{x})$ for some $\vec{a}, \vec{b} \in \D$. Thus either $A(\vec{a},\vec{x}) \not\in \E(t+s)$ or $\M \not\Vdash \forall \vec{x} A(\vec{a},\vec{x})$. In the former case we have $A(\vec{a},\vec{x}) \not\in \E(t) \cup \E(s)$, and hence $\M  \not\Vdash  t :_{\{\vec{a}, \vec{b}\}} A(\vec{a},\vec{x})$ and $\M \not \Vdash  s :_{\{\vec{a}, \vec{b}\}} A(\vec{a},\vec{x})$. We get the same results in the latter case. In either case $\M  \Vdash \neg t :_{\{\vec{\ww}, \vec{\vv}\}} A(\vec{\ww},\vec{x})$ and $\M  \Vdash \neg s :_{\{\vec{\ww}, \vec{\vv}\}} A(\vec{\ww},\vec{x})$. Hence $\M \Vdash \pi'$ as desired. 

Suppose that the rule $(F\cdot)$ is applied
\[
\AXC{$\neg s\cdot t:_{\{\vec{\ww}, \vec{\vv}\}} B(\vec{\ww},\vec{x})$} 
\RightLabel{$(F\cdot)$}
\UIC{$\neg s:_{\{\vec{\ww}, \vec{\vv}\}} (A(\vec{\ww'},\vec{\vv'},\vec{y})\rightarrow B(\vec{\ww},\vec{x})) | \neg t:_{\{\vec{\ww}, \vec{\vv}\}} A(\vec{\ww'},\vec{\vv'},\vec{y})$}
\DisplayProof
\]
where $\{\vec{\ww'} \} \subseteq \{ \vec{\ww} \}$ and $\{\vec{\vv'} \} \subseteq \{ \vec{\vv} \}$. Since $\M \Vdash \neg s\cdot t:_{\{\vec{\ww}, \vec{\vv}\}} B(\vec{\ww},\vec{x})$, we have $\M \Vdash \neg s\cdot t:_{\{\vec{a}, \vec{b}\}} B(\vec{a},\vec{x})$ for some $\vec{a}, \vec{b} \in \D$.  Thus either $B(\vec{a},\vec{x}) \not\in \E(s\cdot t)$ or $\M \not\Vdash \forall \vec{x} B(\vec{a},\vec{x})$. In the former case we have either $A(\vec{a'},\vec{b'},\vec{y}) \rightarrow B(\vec{a},\vec{x}) \not\in \E(s)$ or $A(\vec{a'},\vec{b'},\vec{y}) \not\in \E(t)$, where $A(\vec{a'},\vec{b'},\vec{y}) = A(\vec{\ww'},\vec{\vv'},\vec{y}) \{ \vec{\ww}/\vec{a} , \vec{\vv}/\vec{b}\}$, and hence either $\M \not\Vdash s :_{\{\vec{a}, \vec{b}\}} (A(\vec{a'},\vec{b'},\vec{y}) \r B(\vec{a},\vec{x}))$ or $\M \not\Vdash t :_{\{\vec{a}, \vec{b}\}} A(\vec{a'},\vec{b'},\vec{y})$. In the latter case, either $\M \not\Vdash \forall A$ and hence $\M\not\Vdash  t :_{\{\vec{a}, \vec{b}\}} A(\vec{a'},\vec{b'},\vec{y})$, or $\M \Vdash \forall A$ and hence $\M \not\Vdash s :_{\{\vec{a}, \vec{b}\}} (A(\vec{a'},\vec{b'},\vec{y}) \r B(\vec{a},\vec{x}))$, since $\M  \not\Vdash  \forall (A \r B)$. Thus, in both cases we have either $\M  \Vdash \neg s :_{\{\vec{a}, \vec{b}\}} (A(\vec{a'},\vec{b'},\vec{y}) \r B(\vec{a},\vec{x}))$ or $\M \Vdash \neg t :_{\{\vec{a}, \vec{b}\}} A(\vec{a'},\vec{b'},\vec{y})$. Therefore either $\M \Vdash \neg s :_{\{\vec{\ww}, \vec{\vv}\}} (A(\vec{\ww'},\vec{\vv'},\vec{y}) \r B(\vec{\ww},\vec{x}))$ or  $\M \Vdash \neg t :_{\{\vec{\ww}, \vec{\vv}\}} A(\vec{\ww'},\vec{\vv'},\vec{y})$.

Suppose that the rule $(F!)$ is applied
\[
\AXC{$\neg !t:_{\{\vec{\ww}, \vec{\vv}\}} t:_{\{\vec{\ww}, \vec{\vv}\}} A(\vec{\ww},\vec{x})$} \RightLabel{$(F!)$} 
\UIC{$\neg t:_{\{\vec{\ww}, \vec{\vv}\}} A(\vec{\ww},\vec{x})$}
\DP
\]
Since $\M \Vdash \neg !t:_{\{\vec{\ww}, \vec{\vv}\}} t:_{\{\vec{\ww}, \vec{\vv}\}} A(\vec{\ww},\vec{x})$, we have $\M  \Vdash \neg !t:_{\{\vec{a}, \vec{b}\}} t:_{\{\vec{a}, \vec{b}\}} A(\vec{a},\vec{x})$ for some $\vec{a}, \vec{b} \in \D$. Thus either  $t:_{\{\vec{a}, \vec{b}\}} A(\vec{a},\vec{x}) \not\in \E(!t)$ or $\M \not\Vdash t :_{\{\vec{a}, \vec{b}\}} A(\vec{a},\vec{x})$. In the former case we have $A(\vec{a},\vec{x}) \not\in \E(t)$, and hence $\M \Vdash \neg t :_{\{\vec{a}, \vec{b}\}} A(\vec{a},\vec{x})$. In either case $\M \Vdash \neg t :_{\{\vec{\ww}, \vec{\vv}\}} A(\vec{\ww},\vec{x})$. Hence $\M \Vdash \pi'$ as desired. 

Suppose that the rule $(Ctr)$ is applied
\[
\AXC{$\neg t:_{\{\vec{\ww}, \vec{\vv}\}} A(\vec{\ww},\vec{x})$} \RightLabel{$(Ctr)$} 
\UIC{$\neg t:_{\{\vec{\ww}, \vec{\vv},\uu\}} A(\vec{\ww},\vec{x})$}
\DP
\]
  Since $\M \Vdash \neg t:_{\{\vec{\ww}, \vec{\vv}\}} A(\vec{\ww},\vec{x})$, we have $\M \Vdash \neg t:_{\{\vec{a}, \vec{b}\}} A(\vec{a},\vec{x})$ for some $\vec{a}, \vec{b} \in \D$. Thus either  $A(\vec{a},\vec{x}) \not\in \E(t)$ or $\M \not\Vdash \forall A$. From this it follows that $\M  \Vdash \neg t:_{\{\vec{a}, \vec{b},d\}} A(\vec{a},\vec{x})$ for an arbitrary $d \in \D$. Therefore $\M \Vdash \neg t:_{\{\vec{\ww}, \vec{\vv},\uu\}} A(\vec{\ww},\vec{x})$. Hence $\M \Vdash \pi'$ as desired. 

Suppose that the rule $(Exp)$ is applied
\[
\AXC{$\neg t:_{\{\vec{\ww}, \vec{\vv},\uu\}} A(\vec{\ww},\vec{x})$} \RightLabel{$(Exp)$} 
\UIC{$\neg t:_{\{\vec{\ww}, \vec{\vv}\}} A(\vec{\ww},\vec{x})$}
\DP
\]
where $\uu \not\in Par(A)$. Since $\M \Vdash \neg t:_{\{\vec{\ww}, \vec{\vv},\uu\}} A(\vec{\ww},\vec{x})$, we have $\M \Vdash \neg t:_{\{\vec{a}, \vec{b},d\}} A(\vec{a},\vec{x})$ for some $\vec{a}, \vec{b},d \in \D$. Thus either $A(\vec{a},\vec{x}) \not\in \E(t)$ or $\M \not\Vdash \forall A$. From this it follows that $\M  \Vdash \neg t:_{\{\vec{a}, \vec{b}\}} A(\vec{a},\vec{x})$. Therefore $\M \Vdash \neg t:_{\{\vec{\ww}, \vec{\vv}\}} A(\vec{\ww},\vec{x})$. Hence $\M \Vdash \pi'$ as desired. 

Suppose that the rule $(Ins)$ is applied
\[
\AXC{$\neg t:_{\{\vec{\ww}, \vec{\vv},\uu\}} A(\vec{\ww},\vec{y},\uu)$} \RightLabel{$(Ins)$} 
\UIC{$\neg t:_{\{\vec{\ww}, \vec{\vv},\uu\}} A(\vec{\ww},\vec{y},x)$}
\DP
\]
 Since $\M \Vdash \neg t:_{\{\vec{\ww}, \vec{\vv},\uu\}} A(\vec{\ww},\vec{y},\uu)$, we have $\M \Vdash \neg t:_{\{\vec{a}, \vec{b},d\}} A(\vec{a},\vec{y},d)$ for some $\vec{a}, \vec{b},d \in \D$. Thus either $A(\vec{a},\vec{y},d) \not\in \E(t)$ or $\M \not\Vdash \forall \vec{y} A(\vec{a},\vec{y},d)$. Thus, by the Instantiation Condition $(\E6)$,  either $A(\vec{a},\vec{y},x) \not\in \E(t)$ or $\M \not\Vdash \forall x\forall \vec{y} A(\vec{a},\vec{y},x)$. Thus $\M \Vdash \neg t:_{\{\vec{a}, \vec{b},d\}} A(\vec{a},\vec{y},x)$. Therefore $\M \Vdash \neg t:_{\{\vec{\ww}, \vec{\vv},\uu\}} A(\vec{\ww},\vec{y},x)$. Hence $\M \Vdash \pi'$ as desired.

Suppose that the rule $(gen_x)$ is applied
\[
\AXC{$\neg gen_x(t):_{\{\vec{\ww}, \vec{\vv}\}} \forall x A$} \RightLabel{$(gen_x)$} 
\UIC{$\neg t:_{\{\vec{\ww}, \vec{\vv}\}} A$}
\DP
\]
We consider the case where $A=A(\vec{\ww},\vec{y},x)$, i.e. $x \in FVar(A)$. The case that $x$ is not free in $A$ is treated similarly.  Since $\M \Vdash \neg  gen_x(t):_{\{\vec{\ww}, \vec{\vv}\}} \forall x A(\vec{\ww},\vec{y},x)$, we have $\M \Vdash \neg  gen_x(t):_{\{\vec{a}, \vec{b}\}} \forall x A(\vec{a},\vec{y},x)$ for some $\vec{a}, \vec{b} \in \D$. Thus either $\forall x A(\vec{a},\vec{y},x) \not\in \E(gen_x(t))$ or $\M \not\Vdash \forall \vec{y} \forall x A(\vec{a},\vec{y},x)$. Hence either $A(\vec{a},\vec{y},x) \not\in \E(t)$ or $\M \not\Vdash \forall  A$. In either case $\M  \Vdash \neg t:_{\{\vec{a}, \vec{b}\}}   A(\vec{a},\vec{y},x)$, and therefore $\M  \Vdash \neg  t:_{\{\vec{\ww}, \vec{\vv}\}}   A(\vec{\ww},\vec{y},x)$. Hence $\M \Vdash \pi'$ as desired. 
\qed
\end{proof}

\begin{theorem}[Soundness]
Let $A$ be a sentence of $\FOLP$. If $A$ has an $\FOLP_\CS$-tableau proof, then it is $\FOLP_\CS$-valid.
\end{theorem}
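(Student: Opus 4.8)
The plan is to argue by contraposition on the structure of a purported tableau proof, using Lemma \ref{lem: soundness lemma} as the main engine. Suppose, for contradiction, that $A$ is a sentence with an $\FOLP_\CS$-tableau proof but that $A$ is not $\FOLP_\CS$-valid. Then there is an $\FOLP_\CS$-model $\M$ with $\M \not\Vdash A$, hence $\M \Vdash \neg A$, so the single-node branch consisting of the root $\neg A$ is satisfiable in $\M$ in the sense of Definition \ref{def:Par-satisfiable} (trivially, since $\neg A$ is a sentence and thus contains no parameters).

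The key step is then an induction on the stages of the tableau construction. At each stage, I would maintain the invariant that there is at least one branch $\pi$ of the current (partial) tableau such that every formula occurring in $\pi$ is satisfiable in $\M$. The base case is the observation just made about the root. For the inductive step, a tableau rule is applied to some branch; if it is applied to a branch other than the distinguished satisfiable branch $\pi$, that branch is untouched and the invariant persists, while if it is applied to $\pi$ itself, Lemma \ref{lem: soundness lemma} guarantees that the application produces at least one extension $\pi'$ all of whose formulas are satisfiable in $\M$, and we take $\pi'$ as the new distinguished branch. Since the tableau proof is finite, after finitely many stages we obtain the final closed tableau, which by the invariant still contains a branch $\pi$ satisfiable in $\M$.

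To finish, I would derive a contradiction from the fact that $\pi$ is both closed and satisfiable in $\M$. If $\pi$ closes because both $B$ and $\neg B$ occur in it for some closed $Par$-formula $B$, then by Definition \ref{def:Par-satisfiable} there are tuples from $\D$ witnessing $\M \Vdash B$ and $\M \Vdash \neg B$; since $B$ and $\neg B$ involve the same parameters, one can in fact take the same witnessing tuple $\vec a$ (a closed $Par$-formula $B(\vec\uu)$ is satisfiable iff $\M \Vdash B(\vec a)$ for some $\vec a$, and then $\neg B(\vec\uu)$ satisfiable forces $\M \Vdash \neg B(\vec a')$ for some $\vec a'$ — here I should be a little careful and note that satisfiability of each of $B$ and $\neg B$ individually with possibly different witnesses already suffices to reach a contradiction once we also use that for any single $\vec a$, exactly one of $\M \Vdash B(\vec a)$, $\M \Vdash \neg B(\vec a)$ holds), contradicting clause 2 of Definition \ref{def:forcing relation}. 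If instead $\pi$ closes because $\neg c{:}A'$ occurs with $c{:}A' \in \CS$, then satisfiability of $\neg c{:}A'$ gives $\M \Vdash \neg c{:}A'$ (it is a sentence), while $\M$ respects $\CS$, so $\M \Vdash c{:}A'$, again a contradiction. Either way the assumption fails, so $A$ is $\FOLP_\CS$-valid.

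The main obstacle I anticipate is the bookkeeping in the closure argument around parameters: a closed $Par$-formula is deemed satisfiable if \emph{some} instantiation of its parameters is true in $\M$, so "$B$ satisfiable and $\neg B$ satisfiable" does not immediately mean "the same $\vec a$ works for both". The clean way around this is to note that for the closure-by-contradiction case it is enough that, for the common tuple of parameters $\vec\uu$ appearing in $B$, there exists $\vec a \in \D$ with $\M \Vdash B(\vec a)$ and there exists $\vec a' \in \D$ with $\M \Vdash \neg B(\vec a')$, i.e. $\M \not\Vdash B(\vec a')$; but in a \emph{Mkrtychev} model the interpretation is fixed (there is a single world), and whether $\M \Vdash B(\vec a')$ holds is determined entirely by $\vec a'$, so having both a tuple that satisfies $B$ and a tuple that falsifies $B$ is perfectly consistent in general — the contradiction must instead be extracted by using Lemma \ref{lem: soundness lemma} more carefully, tracking that along the distinguished branch the \emph{witnessing assignment} can be kept coherent, or alternatively by restricting attention (as is standard) to the case where the closed $Par$-formulas $B,\neg B$ are sentences after all stages, since every formula on a tableau branch is closed and the rules only ever introduce parameters in a controlled way. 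I would handle this by strengthening the invariant slightly: rather than "each formula of $\pi$ is satisfiable", carry "there is a single assignment of the parameters occurring in $\pi$ to elements of $\D$ under which every formula of $\pi$ becomes true in $\M$", and check that each case of Lemma \ref{lem: soundness lemma} (which is essentially proved this way already, introducing fresh $\vec a, \vec b$ for fresh parameters) respects this uniform-assignment version; with that in hand the closure contradiction is immediate.
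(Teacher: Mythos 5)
Your proposal follows the same route as the paper: contraposition, a model of $\neg A$, and an induction on the stages of the tableau construction that uses Lemma \ref{lem: soundness lemma} to maintain a satisfiable branch, which then cannot close. The paper compresses all of this into three sentences, so in that sense you have simply filled in the standard details. However, the point you raise in your final paragraph is not mere bookkeeping --- it is a genuine repair that the paper's argument needs. With Definition \ref{def:Par-satisfiable} as literally stated, each formula of a branch is satisfiable via its \emph{own} existential choice of witnesses, so a branch containing both $B(\vec\uu)$ and $\neg B(\vec\uu)$ can perfectly well be ``satisfiable'' (take $B(\uu)=Q(\uu)$ with $\I(Q)$ neither empty nor all of $\D$), and the closure contradiction does not follow. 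Your strengthened invariant --- a single assignment of all parameters on the branch to elements of $\D$ making every formula true --- is the standard and correct fix, and each case of Lemma \ref{lem: soundness lemma} does go through in this uniform form (reusing the assigned value for an old parameter in $(T\forall)$/$(F\exists)$, extending the assignment by the existential witness for the fresh parameter in $(T\exists)$/$(F\forall)$, and leaving the assignment unchanged in the justification rules). So your proof is correct and, on this one point, more careful than the paper's.
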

\begin{proof}
If the sentence $A$ is not $\FOLP_\CS$-valid, then there is an $\FOLP_\CS$-model $\M$ such that $\M\Vdash \neg A$. Then, by Lemma \ref{lem: soundness lemma}, there is no closed $\FOLP_\CS$-tableau beginning with $\neg A$. Therefore, $A$ does not have an $\FOLP_\CS$-tableau proof.\qed
\end{proof}

Next we will prove the completeness theorem by making use of maximal consistent sets. 

\begin{definition}
Suppose $\Gamma$ is a set of closed $Par$-formulas. 

\begin{enumerate}
\item $\Gamma$ is tableau $\FOLP_\CS$-consistent if there is no closed $\FOLP_\CS$-tableau beginning with any finite subset of $\Gamma$. 

\item $\Gamma$ is maximal if it has no proper tableau consistent extension (w.r.t. closed $Par$-formulas).

\item $\Gamma$ is $E$-complete (with members of $Par$ as witnesses) if 

\begin{description}

\item[$\bullet$] $\exists x A(x) \in \Gamma$ implies $A(\uu)\in \Gamma$ for some $\uu\in Par$.

\item[$\bullet$]  $\neg \forall x A(x) \in \Gamma$ implies $\neg A(\uu)\in \Gamma$ for some $\uu\in Par$.

\end{description}

\end{enumerate}
\end{definition}

By making use of the Henkin construction it is not hard to show the following result.

\begin{lemma}
Every tableau $\FOLP_\CS$-consistent set of sentences of $\FOLP$ can be extended to a tableau $\FOLP_\CS$-consistent, maximal and $E$-complete set of closed $Par$-formulas. 
\end{lemma}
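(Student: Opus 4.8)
The plan is to carry out a standard Lindenbaum-style construction together with a Henkin-style witnessing step, interleaved so as to preserve tableau consistency at every stage. First I would fix an enumeration $F_0, F_1, F_2, \ldots$ of all closed $Par$-formulas, and start from the given tableau $\FOLP_\CS$-consistent set $\Gamma$ of sentences (these are $Par$-free). Since $Par$ is denumerable and each $F_i$ uses only finitely many parameters, I would also arrange that infinitely many parameters remain ``fresh'' at every finite stage; concretely, one can thin the enumeration or reserve parameters in advance so that when we reach step $i$ there is always some $\uu \in Par$ not occurring in any formula added so far. Set $\Gamma_0 = \Gamma$. At stage $i+1$, if $\Gamma_i \cup \{F_i\}$ is tableau $\FOLP_\CS$-consistent, put $F_i$ into the set; otherwise keep $\Gamma_i$ unchanged. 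Additionally, in the case where we have just added a formula of the form $\exists x A(x)$, also add $A(\uu)$ for a fresh parameter $\uu$; and in the case where we have just added $\neg \forall x A(x)$, also add $\neg A(\uu)$ for a fresh $\uu$. Finally take $\Delta = \bigcup_i \Gamma_i$.

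The three properties to verify are tableau consistency, maximality, and $E$-completeness of $\Delta$. Maximality is immediate from the construction: if $F_i \notin \Delta$ then $\Gamma_i \cup \{F_i\}$ was already tableau inconsistent, so no consistent extension of $\Delta$ can contain $F_i$. For $E$-completeness, note that if $\exists x A(x) \in \Delta$ then it entered at some finite stage, at which point the corresponding witness $A(\uu)$ was thrown in as well (and survives, since it was added together with $\exists x A(x)$ as part of a consistent set); the $\neg\forall$ case is symmetric. The remaining point is consistency of $\Delta$: since every closed tableau uses only finitely many formulas and each such finite set lies in some $\Gamma_i$, it suffices to check that each $\Gamma_i$ is tableau $\FOLP_\CS$-consistent, which is an induction on $i$.

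The crux of that induction — and the main obstacle — is the witnessing step: I must show that if $\Gamma_i$ is tableau consistent and $\exists x A(x)$ can be consistently added, then $\Gamma_i \cup \{\exists x A(x), A(\uu)\}$ is still tableau consistent for a parameter $\uu$ fresh to $\Gamma_i \cup \{\exists x A(x)\}$. The argument is the usual one: suppose for contradiction there is a closed $\FOLP_\CS$-tableau $T$ starting from a finite subset $S \subseteq \Gamma_i \cup \{\exists x A(x), A(\uu)\}$. We may assume $A(\uu) \in S$ and $\exists x A(x) \in S$ (otherwise $S$ already witnesses inconsistency of something we assumed consistent). Now one applies rule $(T\exists)$ to $\exists x A(x)$ at the root, which — since $\uu$ is new to $S \setminus \{A(\uu)\}$ — is a legal application introducing exactly $A(\uu)$; grafting $T$ below this node yields a closed tableau beginning with $(S \setminus \{A(\uu)\}) \cup \{\exists x A(x)\} \subseteq \Gamma_i \cup \{\exists x A(x)\}$, contradicting the hypothesis that this set is tableau consistent. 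The freshness bookkeeping is exactly what makes the $(T\exists)$ (resp.\ $(F\forall)$) side condition applicable, so the only real care needed is to set up the parameter reservation at the start so that a new parameter is always available; everything else is routine. The dual case for $\neg\forall x A(x)$ uses rule $(F\forall)$ in the same way.
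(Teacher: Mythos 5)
Your construction is correct and is exactly the Lindenbaum--Henkin argument that the paper invokes without detail (it only remarks that the lemma follows ``by making use of the Henkin construction''); the key step---grafting the closed tableau for $S$ below a fresh application of $(T\exists)$ (resp.\ $(F\forall)$) to discharge the witness---is the standard and correct way to push consistency through the witnessing stages, and the freshness bookkeeping works because the initial set consists of sentences and only finitely many parameters appear at each finite stage. Note also that your grafting is safe with respect to the new-parameter side conditions inside the original tableau, since every parameter of $\exists x A(x)$ already occurs in $A(\uu) \in S$.
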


It is easy to show that $E$-complete maximally tableau $\FOLP_\CS$-consistent sets are closed under $\FOLP_\CS$-tableau rules. For a non-branching rule like
\begin{prooftree}
\AXC{$\alpha$}
\UIC{$\alpha_1$}\noLine
\UIC{$\alpha_2$}
\end{prooftree}
this means that if $\alpha$ is in a $E$-complete maximally tableau $\FOLP_\CS$-consistent set $\Gamma$, then both $\alpha_1\in\Gamma$ and $\alpha_2\in\Gamma$. For a branching rule like

\[\di{\frac{\beta}{\beta_1 | \beta_2}}\]
this means that if $\beta$ is in a $E$-complete maximally tableau $\FOLP_\CS$-consistent set $\Gamma$, then $\beta_1\in\Gamma$ or $\beta_2\in\Gamma$. For the rule $(F\cdot)$ this means that if $\neg s \cdot t :_X B\in\Gamma$, then for every formula $A$ such that $Par(A) \subseteq X$ either $\neg s:_X (A\r B) \in\Gamma$ or $\neg t:_X A \in\Gamma$.

\begin{lemma}\label{lem:downward saturated}
Suppose $\Gamma$ is an $E$-complete maximally tableau $\FOLP_\CS$-consistent set of closed $Par$-formulas. Then $\Gamma$ is closed under $\FOLP_\CS$-tableau rules.
\end{lemma}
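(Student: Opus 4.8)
The plan is to verify, rule by rule from Table~\ref{table:tableau rules FOLP}, that if the premise of a rule lies in $\Gamma$ then the required conclusion(s) lie in $\Gamma$. The underlying principle is uniform: since $\Gamma$ is maximally tableau $\FOLP_\CS$-consistent, for any closed $Par$-formula $C$, exactly one of $C \in \Gamma$ or $\neg C \in \Gamma$ holds (if neither were in $\Gamma$, one could consistently add $C$, contradicting maximality; both cannot be in $\Gamma$ by consistency via rule-free closure). So to show a conclusion $C$ of a non-branching rule is in $\Gamma$, I would assume $\neg C \in \Gamma$ and derive a closed tableau from the finite subset $\{\alpha, \neg C\} \subseteq \Gamma$, contradicting consistency: first apply the rule to $\alpha$ to get $C$ on the branch, then close the branch on the pair $C, \neg C$. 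For a branching rule $\beta / (\beta_1 \mid \beta_2)$, assume $\neg\beta_1, \neg\beta_2 \in \Gamma$, apply the rule to $\beta$ producing two branches, and close each one. For the rules with side conditions on parameters ($(F\exists)$, $(T\forall)$, $(T\exists)$, $(F\forall)$, $(Exp)$, $(F\cdot)$) one must check the side condition is available: for $(T\forall)$, $(F\exists)$, $(F!)$, $(Ctr)$, $(F\cdot)$ the side condition only permits more, so it is not an obstacle; for $(T\exists)$, $(F\forall)$ the $E$-completeness hypothesis supplies the needed witness directly (these are exactly the $\exists$/$\neg\forall$ cases in the definition of $E$-complete), so no tableau argument is even required there.

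The only genuinely delicate point is the rule $(F\cdot)$, whose ``conclusion'' is a schema: if $\neg s\cdot t :_X B \in \Gamma$ then for \emph{every} closed $Par$-formula $A$ with $Par(A) \subseteq X$, either $\neg s:_X(A\r B) \in \Gamma$ or $\neg t:_X A \in \Gamma$. Here I would fix such an $A$, assume for contradiction that both $s:_X(A\r B) \in \Gamma$ and $t:_X A \in \Gamma$ fail to have their negations in $\Gamma$ — equivalently, both $\neg s:_X(A\r B) \notin \Gamma$ and $\neg t:_X A \notin \Gamma$ — so by maximality $s:_X(A\r B) \in \Gamma$ and $t:_X A \in \Gamma$; but then the finite set $\{\neg s\cdot t :_X B,\ \neg s:_X(A\r B),\ \neg t:_X A\}$ is... wait, that is not what I want. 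Rather: assume $\neg s:_X(A\r B) \notin \Gamma$ and $\neg t:_X A \notin \Gamma$; applying rule $(F\cdot)$ with this particular $A$ to $\neg s\cdot t:_X B$ (the side condition $Par(A)\subseteq X$ being met) splits into two branches whose new formulas are $\neg s:_X(A\r B)$ and $\neg t:_X A$ respectively; since by maximality $s:_X(A\r B) \in \Gamma$ and $t:_X A \in \Gamma$, each branch closes on a complementary pair, giving a closed tableau from the finite subset $\{\neg s\cdot t:_X B,\ s:_X(A\r B),\ t:_X A\}$ of $\Gamma$, contradicting consistency. Hence one of the two negations is in $\Gamma$, as required.

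For completeness of the case analysis I would also note the rule $(F\neg)$ ($\neg\neg A \in \Gamma \Rightarrow A \in \Gamma$), the propositional rules $(T\r)$, $(F\r)$, and the justification rules $(T:)$, $(F+)$, $(F!)$, $(Ctr)$, $(Exp)$, $(Ins)$, $(gen_x)$, each handled by the same two-line pattern: apply the rule, then close on a complementary pair produced against the assumed $\neg C \in \Gamma$. One small bookkeeping remark: the tableau rules operate on closed $Par$-formulas and produce closed $Par$-formulas, so all the intermediate objects stay within the class over which $\Gamma$ is maximal; this is what legitimizes invoking maximality to conclude $C \in \Gamma$ from $\neg C \notin \Gamma$. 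I do not expect any real obstacle — the proof is entirely routine given maximality, consistency, and $E$-completeness — which is presumably why the paper states it as a lemma with the remark that it is ``easy to show''; the $(F\cdot)$ schema is the only place where one must be slightly careful to quantify correctly over the auxiliary formula $A$.
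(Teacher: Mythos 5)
There is a genuine gap at the foundation of your argument: you assume that a maximally tableau $\FOLP_\CS$-consistent set is negation-complete, i.e.\ that for every closed $Par$-formula $C$ exactly one of $C\in\Gamma$, $\neg C\in\Gamma$ holds, and every case of your proof leans on this (you pass from $C\notin\Gamma$ to $\neg C\in\Gamma$ and then close a branch on the complementary pair $C,\neg C$). But maximality here only says that $\Gamma\cup\{C\}$ is tableau-inconsistent whenever $C\notin\Gamma$; it does not put $\neg C$ into $\Gamma$. To get negation-completeness you would have to show that if both $\Gamma\cup\{C\}$ and $\Gamma\cup\{\neg C\}$ admit closed tableaux over finite subsets, then some finite subset of $\Gamma$ already does --- and in a cut-free tableau calculus there is no rule that lets you combine those two closed tableaux into one (that combination is exactly a cut). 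Your parenthetical justification ``if neither were in $\Gamma$, one could consistently add $C$'' asserts the consistency of $\Gamma\cup\{C\}$ without proof. The paper deliberately avoids relying on this property: its Truth Lemma is stated as two separate implications ($F\in\Gamma$ implies $\M\Vdash F$, and $\neg F\in\Gamma$ implies $\M\not\Vdash F$) rather than as a biconditional, precisely because $\Gamma$ need not decide every formula.

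The repair is mechanical but changes the shape of every case, and it is what the paper does. For a non-branching rule with premise $\alpha\in\Gamma$ and conclusion $C$: if $C\notin\Gamma$, maximality gives a closed tableau for some finite $\Gamma_0\cup\{C\}$ with $\Gamma_0\subseteq\Gamma$; now start a tableau from $\Gamma_0\cup\{\alpha\}$, apply the rule to $\alpha$ to introduce $C$, and graft the closed tableau for $\Gamma_0\cup\{C\}$ below it --- this yields a closed tableau for a finite subset of $\Gamma$, contradicting consistency. For $(F\cdot)$ one takes closed tableaux for $\Gamma_1\cup\{\neg s:_X(A\rightarrow B)\}$ and $\Gamma_2\cup\{\neg t:_X A\}$ and grafts them onto the two branches created by applying $(F\cdot)$ to $\neg s\cdot t:_X B$ over $\Gamma_1\cup\Gamma_2$. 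Your use of $E$-completeness for $(T\exists)$ and $(F\forall)$, and your observation that the parameter side conditions pose no obstacle, are correct and match the paper; only the closure mechanism needs to be replaced by this grafting argument throughout.
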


\begin{proof}
The proof for rules $(F\neg)$, $(F\r)$, and $(T\r)$, are standard. We detail the proof for other tableau rules.

\begin{description}
\item[$(T\forall)$] Suppose $\forall x A \in \Gamma$ and $\uu$ is an arbitrary parameter. We want to show that $A(\uu)\in \Gamma$. If this is not the case, since $\Gamma$ is maximal, then $\Gamma \cup \{A(\uu)\}$ is not tableau $\FOLP_\CS$-consistent. Hence there is a closed  $\FOLP_\CS$-tableau for a finite subset, say $\Gamma_0 \cup \{A(\uu)\}$. But $\Gamma_0 \cup \{\forall x A\}$ is a finite subset of $\Gamma$ and, using rule $(T\forall)$, there is a closed  $\FOLP_\CS$-tableau for it, contra the tableau consistency of $\Gamma$. The case of $(F\exists)$ is similar.

\item[$(T\exists)$] Suppose $\exists x A \in \Gamma$. Since $\Gamma$ is $E$-complete, $A(\uu) \in \Gamma$ for some parameter $\uu$. The case of $(F\forall)$ is similar.

\item[$(F\cdot)$] Suppose $\neg s \cdot t :_X B\in\Gamma$. Suppose towards a contradiction that for some formula $A$ such that $Par(A) \subseteq X$ we have $\neg s:_X (A\r B)\not\in\Gamma$ and $\neg t:_X A\not\in\Gamma$. Since $\Gamma$ is maximal,  $\Gamma \cup \{ \neg s:_X (A\r B) \}$ and $\Gamma \cup \{ \neg t:_X A\}$ are not tableau consistent.  Thus there are closed tableaux for finite subsets, say $\Gamma_1 \cup \{ \neg s:_X (A\r B) \}$ and $\Gamma_2 \cup \{ \neg t:_X A\}$. But $\Gamma_1 \cup \Gamma_2 \cup \{\neg  s \cdot t :_X  B \}$ is a finite subset of $\Gamma$ and, using rule $(F\cdot)$, there is a closed  $\FOLP_\CS$-tableau for it, contra the tableau consistency of $\Gamma$.

\item[$(F!)$] Suppose $\neg !t:_X t:_{X} A \in \Gamma$. We want to show that $\neg t:_{X} A \in \Gamma$. If it is not the case, since $\Gamma$ is maximal, then $\Gamma \cup \{\neg t:_{X} A\}$ is not tableau $\FOLP_\CS$-consistent. Hence there is a closed  $\FOLP_\CS$-tableau for a finite subset, say $\Gamma_0 \cup \{\neg t:_{X} A\}$. But $\Gamma_0 \cup \{\neg !t:_X t:_{X} A\}$ is a finite subset of $\Gamma$ and, using rule $(F!)$, there is a closed  $\FOLP_\CS$-tableau for it, contra the tableau consistency of $\Gamma$. The cases of rules $(T:)$ and $(F+)$ are similar.

\item[$(Ctr)$] Suppose $\neg t:_{X} A \in \Gamma$. We want to show that $\neg t:_{X\uu} A \in \Gamma$. If it is not the case, since $\Gamma$ is maximal, then $\Gamma \cup \{\neg t:_{X\uu} A\}$ is not tableau $\FOLP_\CS$-consistent. Hence there is a closed  $\FOLP_\CS$-tableau for a finite subset, say $\Gamma_0 \cup \{\neg t:_{X\uu} A\}$. But $\Gamma_0 \cup \{\neg t:_{X} A\}$ is a finite subset of $\Gamma$ and, using rule $(Ctr)$, there is a closed  $\FOLP_\CS$-tableau for it, contra the tableau consistency of $\Gamma$.

\item[$(Exp)$] Suppose $\neg t:_{X\uu} A \in \Gamma$ and $\uu \not\in Par(A)$. We want to show that $\neg t:_{X} A \in \Gamma$. If it is not the case, since $\Gamma$ is maximal, then $\Gamma \cup \{\neg t:_{X} A\}$ is not tableau $\FOLP_\CS$-consistent. Hence there is a closed  $\FOLP_\CS$-tableau for a finite subset, say $\Gamma_0 \cup \{\neg t:_{X} A\}$. But $\Gamma_0 \cup \{\neg t:_{X\uu} A\}$ is a finite subset of $\Gamma$ and, using rule $(Exp)$, there is a closed  $\FOLP_\CS$-tableau for it, contra the tableau consistency of $\Gamma$.

\item[$(Ins)$] Suppose $\neg t:_{X} A(\uu)\in \Gamma$. We want to show that $\neg t:_{X} A(x) \in \Gamma$. If it is not the case, since $\Gamma$ is maximal, then $\Gamma \cup \{\neg t:_{X} A(x)\}$ is not tableau $\FOLP_\CS$-consistent. Hence there is a closed  $\FOLP_\CS$-tableau for a finite subset, say $\Gamma_0 \cup \{\neg t:_{X} A(x)\}$. But $\Gamma_0 \cup \{\neg t:_{X} A(\uu)\}$ is a finite subset of $\Gamma$ and, using rule $(Ins)$, there is a closed  $\FOLP_\CS$-tableau for it, contra the tableau consistency of $\Gamma$.

\item[$(gen_x)$] Suppose $\neg gen_x(t):_{X} \forall x A \in \Gamma$. We want to show that $\neg t:_{X} A \in \Gamma$. If it is not the case, since $\Gamma$ is maximal, then $\Gamma \cup \{\neg t:_{X} A\}$ is not tableau $\FOLP_\CS$-consistent. Hence there is a closed  $\FOLP_\CS$-tableau for a finite subset, say $\Gamma_0 \cup \{\neg t:_{X} A\}$. But $\Gamma_0 \cup \{\neg gen_x(t):_{X} \forall x A\}$ is a finite subset of $\Gamma$ and, using rule $(gen_x)$, there is a closed  $\FOLP_\CS$-tableau for it, contra the tableau consistency of $\Gamma$. \qed
\end{description}

\end{proof}

\begin{definition}\label{def:canonical model tableau}
Given an $E$-complete maximally tableau $\FOLP_\CS$-consistent set $\Gamma$ of closed $Par$-formulas, the canonical model $\M=(\D,\I,\E)$ with respect to $\Gamma$ and $\CS$ is defined as follows:

\begin{itemize}
\item $\D=Par$.

\item $\I(Q) = \{ (\uu_1, \ldots, \uu_n) \in \D~|~Q(\uu_1, \ldots, \uu_n) \in \Gamma \}$, for any n-place relation symbol $Q$.

\item $\E(t) = \{A~|~\neg t:_{Par(A)} A \not\in \Gamma\}$.

\end{itemize}
\end{definition}

\begin{lemma}
Given an $E$-complete maximally tableau $\FOLP_\CS$-consistent set $\Gamma$ of closed $Par$-formulas, the canonical model $\M=(\D,\I,\E)$ with respect to $\Gamma$ and $\CS$ is an $\FOLP_\CS$-model.
\end{lemma}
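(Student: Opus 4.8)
The plan is to verify that the canonical model $\M = (\D, \I, \E)$ from Definition~\ref{def:canonical model tableau} satisfies all the structural requirements of Definition~\ref{def:FOLP-models}. The domain $\D = Par$ is non-empty since $Par$ is denumerable, and $\I$ manifestly assigns an $n$-ary relation on $\D$ to each $n$-place predicate symbol. The substance of the proof is checking that $\E$ satisfies the six closure conditions $\E1$--$\E6$. For each, I would argue by contradiction using the consistency of $\Gamma$ together with Lemma~\ref{lem:downward saturated}, which tells us $\Gamma$ is closed under all tableau rules. The recurring pattern is: assume the hypothesis of the condition on $\E$ but suppose the conclusion fails; unwind the definition $\E(t) = \{A \mid \neg t:_{Par(A)} A \notin \Gamma\}$ to conclude that certain negated-justification formulas belong to $\Gamma$; then apply the appropriate tableau rule (or the closure condition on $\CS$) to derive a closed tableau from a finite subset of $\Gamma$, contradicting tableau $\FOLP_\CS$-consistency.

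Concretely, I would proceed condition by condition. For $\E1$: if $c:A \in \CS$ but $A \notin \E(c)$, then $\neg c:_{Par(A)} A \in \Gamma$; but note $A$ is an axiom instance and hence a sentence, so $Par(A) = \emptyset$ and $\neg c:A \in \Gamma$ — but then the single-node tableau with root $\neg c:A$ is already closed by the second closure clause, contradiction. For $\E2$: if $A \r B \in \E(s)$ and $A \in \E(t)$ but $B \notin \E(s\cdot t)$, then $\neg s\cdot t :_{Par(B)} B \in \Gamma$; here I would need $Par(A) \subseteq Par(B) \cup Par(A)$ handled by first applying $(Exp)$ to widen the subscripts of $s$ and $t$ appropriately, then applying $(F\cdot)$ with the formula $A$ (whose parameters are contained in the widened subscript) to get that one of $\neg s:_X(A\r B)$ or $\neg t:_X A$ lies in $\Gamma$; each then contradicts $A \r B \in \E(s)$ or $A \in \E(t)$ after using $(Ctr)$/$(Exp)$ to match subscripts with $Par(A\r B)$ and $Par(A)$ respectively. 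For $\E3$: from $A \in \E(s)\cup\E(t)$ and $A \notin \E(s+t)$ we get $\neg (s+t):_{Par(A)} A \in \Gamma$, so by $(F+)$ both $\neg s:_{Par(A)} A$ and $\neg t:_{Par(A)} A$ are in $\Gamma$, contradiction. For $\E4$: from $A \in \E(t)$ and $D(A) \subseteq X \subseteq D$, if $t:_X A \notin \E(!t)$ then $\neg !t:_X t:_X A \in \Gamma$ (using $Par(t:_X A) = X$), so by $(F!)$ we get $\neg t:_X A \in \Gamma$; since $Par(A) \subseteq X$, applying $(Exp)$ repeatedly (legitimate as the added parameters are outside $Par(A)$) shows $\neg t:_{Par(A)} A$ would lead to a closed tableau — or more directly, $\neg t:_X A \in \Gamma$ together with $\neg t:_{Par(A)} A \notin \Gamma$ contradicts closure under $(Ctr)$. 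For $\E5$: from $A \in \E(t)$ and $\forall x A \notin \E(gen_x(t))$ we get $\neg gen_x(t):_{Par(\forall x A)} \forall x A \in \Gamma$, and since $Par(\forall x A) = Par(A) \setminus \{\text{parameters bound by nothing}\} = Par(A)$ (as $x$ is a variable, not a parameter), rule $(gen_x)$ yields $\neg t:_{Par(A)} A \in \Gamma$, contradiction. For $\E6$: from $A(x) \in \E(t)$ and $a \in \D = Par$ with $A(a) \notin \E(t)$, we have $\neg t:_{Par(A(a))} A(a) \in \Gamma$; after adjusting subscripts with $(Exp)$ so that the parameter $a$ appears, apply $(Ins)$ to obtain $\neg t:_{X} A(x) \in \Gamma$ for a suitable $X$, then $(Ctr)$/$(Exp)$ to reach $\neg t:_{Par(A(x))} A(x) \in \Gamma$, contradiction.

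The main obstacle I anticipate is the careful bookkeeping of the variable/parameter index sets $X$ in the justification subscripts, since the tableau rules $(Ctr)$, $(Exp)$, and $(Ins)$ are precisely the tools for reconciling the ``canonical'' subscript $Par(A)$ appearing in the definition of $\E$ with whatever subscript $X$ is demanded by conditions $\E4$, $\E6$. In particular $\E4$ requires the subscript $X$ to range over all sets with $D(A) \subseteq X \subseteq D$, not just $Par(A)$, so one must check that $\neg t:_X A \notin \Gamma$ follows from $\neg t:_{Par(A)} A \notin \Gamma$ via the admissibility of $(Ctr)$ (adding parameters to the subscript) — and conversely the other direction via $(Exp)$ when those parameters are genuinely new to $A$. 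A secondary subtlety in $\E6$ is ensuring that the parameter $a$ one instantiates with does not already occur in $A(x)$ in a conflicting way, but since $a \in Par = \D$ and $A(x)$ is an arbitrary $\D$-formula, one can rename if necessary, or observe that $(Ins)$ as stated handles exactly this replacement of a parameter by a variable. Once the index-set manipulations are set up cleanly, each of $\E1$--$\E6$ is a short contradiction argument of the kind already rehearsed in Lemma~\ref{lem:downward saturated}.
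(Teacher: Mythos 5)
Your proposal is correct and follows essentially the same route as the paper: verify $\E 1$--$\E 6$ by combining the tableau-consistency of $\Gamma$ with its closure under the rules (Lemma~\ref{lem:downward saturated}), using $(Ctr)$, $(Exp)$ and $(Ins)$ to reconcile the canonical subscript $Par(A)$ with whatever subscript the condition demands. One small correction to your bookkeeping: in the tableau system $(Ctr)$ is the rule that \emph{adds} a parameter to the subscript of a negated justification formula and $(Exp)$ the one that \emph{removes} it, so when you argue by contradiction (with the negated formula \emph{in} $\Gamma$) the widening step in $\E 2$ and $\E 6$ is done by $(Ctr)$, not $(Exp)$ --- the paper sidesteps this by running the whole argument in the contrapositive ``not in $\Gamma$'' direction.
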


\begin{proof}
Suppose $\Gamma$ is an $E$-complete maximally tableau $\FOLP_\CS$-consistent set, and $\M=(\D,\I,\E)$ is the canonical model  with respect to $\Gamma$ and $\CS$. We  will show that the admissible evidence function $\E$ satisfies $\E 1$-$\E 6$ from Definition \ref{def:FOLP-models}.

\begin{description}
\item[$(\E 1)$] Suppose that $c:A\in\CS$. Then $Par(A) = \emptyset$. We have to show that $A\in\E(c)$. Since $\Gamma$ is tableau $\FOLP_\CS$-consistent, $\neg c:A\not\in\Gamma$ and hence $\neg c:_{Par(A)} A\not\in\Gamma$. Thus  $A\in\E(c)$.

\item[$(\E 2)$] Suppose that $A\in\E(t)$ and $A\r B\in\E(s)$. We have to show that \linebreak $B\in\E(s\cdot t)$. Let $X=Par(A \r B)=Par(A) \cup Par(B)$. By the definition of $\E$,   $\neg t:_{Par(A)} A\not\in\Gamma$ and $\neg s:_X (A\r B)\not \in\Gamma$. Since $\Gamma$ is closed under rule $(Exp)$, $\neg t:_X A \not\in\Gamma$. Since $\Gamma$ is closed under rule $(F\cdot)$, $\neg s\cdot t:_X B\not\in\Gamma$. Since $\Gamma$ is closed under rule $(Ctr)$, $\neg s\cdot t:_{Par(B)} B\not\in\Gamma$. Hence, by the definition of $\E$, $B\in\E(s\cdot t)$.

\item[$(\E 3)$] Suppose that $A\in\E(s)\cup \E(t)$. We have to show that $A\in\E(s+t)$. If $A\in\E(s)$, then $\neg s:_{Par(A)} A \not\in \Gamma$. Since $\Gamma$ is closed under rule  $(F+)$,  $\neg s+t:_{Par(A)} A\not\in\Gamma$. Therefore, $A\in\E(s+t)$. The case that $A\in\E(t)$ is similar.

\item[$(\E 4)$] Suppose that $A\in\E(t)$ and $\D(A)=Par(A) \subseteq X$. First consider the case that $X \neq Par(A)$. We have to show that $t:_X A \in \E(!t)$. By the  definition of $\E$, $\neg t:_{Par(A)} A \not\in \Gamma$. Since $\Gamma$ is closed under rule $(Exp)$, $\neg t:_X A \not\in \Gamma$. Since $\Gamma$ is closed under rule $(F!)$,  $\neg !t:_X t:_X A\not\in\Gamma$. Therefore,  $t:_X A\in\E(!t)$. The case that $X = Par(A)$ is similar.

\item[$(\E 5)$] Suppose that $A\in \E(t)$. We have to show that $\forall x A\in\E(gen_x(t))$. By the definition of $\E$, $\neg t:_{Par(A)} A \not\in \Gamma$. Since $\Gamma$ is closed under rule  $(gen_x)$,  $\neg gen_x(t) :_{Par(A)} \forall x A\not\in\Gamma$. Therefore, $\forall x A\in\E(gen_x(t))$.

\item[$(\E 6)$] Suppose that $A(x) \in\E(t)$ and $\uu \in \D=Par$. We have to show that \linebreak $A(\uu) \in \E(t)$. Let $X = Par(A(x))$. By the definition of $\E$, $\neg t:_{X} A(x) \not\in \Gamma$. We distinguish two cases. (1) Suppose $\uu \not\in X$. Since $\Gamma$ is closed under rule $(Exp)$, $\neg t:_{X\uu} A(x) \not\in \Gamma$. Since $\Gamma$ is closed under rule $(Ins)$, $\neg t:_{X\uu} A(\uu) \not\in \Gamma$. Therefore,  $A(\uu) \in \E(t)$. (2) Suppose $\uu \in X$. Since $\Gamma$ is closed under rule $(Ins)$, $\neg t:_{X} A(\uu) \not\in \Gamma$. Therefore,  $A(\uu) \in \E(t)$. \qed
\end{description}

\end{proof}

\begin{lemma}[Truth Lemma] 
Suppose $\Gamma$ is an $E$-complete maximally tableau $\FOLP_\CS$-consistent set of closed $Par$-formulas and $\M=(\D,\I,\E)$ is the canonical model with respect to $\Gamma$ and $\CS$. Then for every closed $Par$-formula $F$:

\begin{enumerate}
\item $F \in \Gamma$ implies $\M \Vdash F$.
\item $\neg F \in \Gamma$ implies $\M \not\Vdash F$.
\end{enumerate}
\end{lemma}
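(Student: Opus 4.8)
The plan is to prove both statements simultaneously by induction on the structure of the closed $Par$-formula $F$, since the two halves feed into each other through the negation and implication cases. Throughout I will use repeatedly that $\Gamma$ is tableau $\FOLP_\CS$-consistent (so it never contains both $A$ and $\neg A$, nor any $\neg c{:}A$ with $c{:}A\in\CS$), that $\Gamma$ is maximal (so for every closed $Par$-formula $A$, at least one of $A$, $\neg A$ lies in $\Gamma$ — otherwise one could be added consistently), and that $\Gamma$ is closed under all the tableau rules of Lemma~\ref{lem:downward saturated} as well as $E$-complete.

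\textbf{Base case.} For $F = Q(\uu_1,\dots,\uu_n)$ an atomic closed $Par$-formula: if $F\in\Gamma$ then $(\uu_1,\dots,\uu_n)\in\I(Q)$ by the very definition of $\I$ in the canonical model, so $\M\Vdash F$; and if $\neg F\in\Gamma$ then by consistency $F\notin\Gamma$, hence $(\uu_1,\dots,\uu_n)\notin\I(Q)$ and $\M\not\Vdash F$.

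\textbf{Inductive step.} I handle the cases by the outermost connective. For $F=\neg A$: if $\neg A\in\Gamma$, part~(2) of the induction hypothesis for $A$ gives $\M\not\Vdash A$, i.e.\ $\M\Vdash\neg A$; if $\neg\neg A\in\Gamma$, closure under $(F\neg)$ gives $A\in\Gamma$, so by IH~(1) $\M\Vdash A$, i.e.\ $\M\not\Vdash\neg A$. For $F=A\r B$: if $A\r B\in\Gamma$, closure under $(T\r)$ gives $\neg A\in\Gamma$ or $B\in\Gamma$, and IH yields $\M\not\Vdash A$ or $\M\Vdash B$, hence $\M\Vdash A\r B$; if $\neg(A\r B)\in\Gamma$, closure under $(F\r)$ gives $A\in\Gamma$ and $\neg B\in\Gamma$, so $\M\Vdash A$ and $\M\not\Vdash B$, hence $\M\not\Vdash A\r B$. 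For the quantifier cases $F=\forall x A(x)$: if $\forall x A(x)\in\Gamma$, closure under $(T\forall)$ gives $A(\uu)\in\Gamma$ for \emph{every} $\uu\in Par=\D$, so by IH~(1) $\M\Vdash A(a)$ for all $a\in\D$, i.e.\ $\M\Vdash\forall x A(x)$; if $\neg\forall x A(x)\in\Gamma$, $E$-completeness (equivalently closure under $(F\forall)$) gives $\neg A(\uu)\in\Gamma$ for some $\uu$, so by IH~(2) $\M\not\Vdash A(\uu)$ and hence $\M\not\Vdash\forall x A(x)$. The case $F=\exists x A(x)$ is dual, using closure under $(T\exists)$/$E$-completeness for the positive half and closure under $(F\exists)$ for the negative half.

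\textbf{The justification case.} This is the one requiring care, since the forcing clause for $t{:}_X A$ in Definition~\ref{def:forcing relation} is $\M\Vdash t{:}_X A$ iff $A\in\E(t)$ \emph{and} $\M\Vdash\forall A$, so I must extract both conjuncts. Suppose $F = t{:}_X A \in\Gamma$, where $A=A(\vec\ww,\vec x)$ with $X=Par(A)\cup\{\text{possibly extra parameters}\}$ — more precisely $Par(A)\subseteq X$ by the side condition $X\subseteq Par$ on the rules. First, for membership in $\E(t)$: I need $\neg t{:}_{Par(A)} A\notin\Gamma$. If $X=Par(A)$ this is immediate from consistency ($t{:}_X A\in\Gamma$ forbids $\neg t{:}_X A\in\Gamma$); if $X\supsetneq Par(A)$, note that $\neg t{:}_{Par(A)}A\in\Gamma$ would, by repeated closure under $(Ctr)$, force $\neg t{:}_{X}A\in\Gamma$ (adding back the extra parameters, each not in $Par(A)$), contradicting consistency — so $A\in\E(t)$. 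Second, for $\M\Vdash\forall A$: apply closure under $(T:)$ to get $\forall A\in\Gamma$, and then the quantifier clauses of the induction (applied to the universal closure, peeling off each $\forall$ via $(T\forall)$ and the IH) give $\M\Vdash\forall A$. Hence $\M\Vdash t{:}_X A$. Conversely, suppose $\neg t{:}_X A\in\Gamma$. I must show $\M\not\Vdash t{:}_X A$, i.e.\ either $A\notin\E(t)$ or $\M\not\Vdash\forall A$. Here the delicate point is that $\E(t)$ is defined via $\neg t{:}_{Par(A)}A\notin\Gamma$, not via $X$; so if $X\supsetneq Par(A)$ I use closure under $(Exp)$ repeatedly on $\neg t{:}_X A\in\Gamma$ to strip the extra parameters (each legitimately removable since it is not in $Par(A)$), obtaining $\neg t{:}_{Par(A)}A\in\Gamma$, whence $A\notin\E(t)$ and we are done; if $X=Par(A)$ already, then directly $A\notin\E(t)$. (In either subcase membership fails, so I do not even need the $\forall A$ clause for the negative direction — but it is reassuring that consistency also rules out $\forall A\in\Gamma$ alongside, via $(T:)$ applied to $\neg t{:}_XA$ only giving information if the rule were invertible, which it is not, so I rely on the $\E(t)$ argument.)

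\textbf{Main obstacle.} The subtle step is the interaction between the index set $X$ appearing in the formula and the index set $Par(A)$ used in the definition of $\E$: the equivalences $\neg t{:}_X A\in\Gamma \iff \neg t{:}_{Par(A)}A\in\Gamma$ (for $Par(A)\subseteq X$) must be established in both directions using closure under $(Ctr)$ and $(Exp)$, together with the side condition $\uu\notin Par(A)$ on $(Exp)$, which is exactly what makes the extra parameters removable. Getting this bookkeeping right — and recognizing that $t{:}_XA\in\Gamma$ requires \emph{two} facts about the canonical model, the evidence-function membership \emph{and} the truth of the universal closure, the latter handled by folding the quantifier cases of the induction into a sub-induction on the quantifier prefix of $\forall A$ — is the heart of the argument; everything else is the routine propagation of the tableau-rule closure properties through the recursion.
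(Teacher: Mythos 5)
Your proof is correct and follows essentially the same route as the paper's: induction on complexity, with the justification case split on whether $X = Par(A)$ and handled by shuttling between $X$ and $Par(A)$ via closure under $(Ctr)$ (positive direction) and $(Exp)$ (negative direction), and the $\forall A$ conjunct obtained from closure under $(T:)$ and $(T\forall)$ together with the induction hypothesis applied to the instances $A(\vec{\uu})$. The only blemish is your parenthetical claim that maximality yields $A\in\Gamma$ or $\neg A\in\Gamma$ for every closed $Par$-formula $A$ --- for \emph{tableau} consistency this would require a cut-style argument --- but you never actually use that claim, so the proof stands as is.
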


\begin{proof}
By induction on the complexity of $F$. The base case and the propositional and quantified inductive cases are standard. The proof for the case that $F=t:_X A$ is as follows. Note that $Par(A) \subseteq X$.

Assume $t:_X A \in\Gamma$. Since $\Gamma$ is $\FOLP_\CS$-consistent, $\neg t:_X A \not\in \Gamma$. If $X=Par(A)$, then $A \in \E(t)$. If $X \neq Par(A)$, then since $\Gamma$ is closed under rule $(Ctr)$, $\neg t:_{Par(A)} A \not\in \Gamma$. Thus $A \in \E(t)$. On the other hand, since $t:_X A \in\Gamma$ and $\Gamma$ is closed under rule $(T:)$, $\forall A \in \Gamma$. Let $FVar(A) = \{\vec{x}\}$. Thus $\forall \vec{x} A(\vec{x}) \in \Gamma$. Since $\Gamma$ is closed under rule $(T\forall)$, $A(\vec{\uu}) \in \Gamma$ for any $\vec{\uu} \in Par$. Hence, by the induction hypothesis, $\M \Vdash A(\vec{\uu})$ for any $\vec{\uu} \in Par=\D$, and thus $\M \Vdash \forall A$. Therefore, $\M \Vdash t:_X A$.

Assume $\neg t:_X A\in\Gamma$. If $X=Par(A)$, then $A \not\in \E(t)$. On the other hand, if $X \neq Par(A)$, then since $\Gamma$ is closed under rule $(Exp)$, $\neg t:_{Par(A)} A \in \Gamma$, and hence  $A \not\in \E(t)$. In either cases $\M \not\Vdash t:_X A$. 
\qed
\end{proof}

\begin{theorem}[Completeness]\label{thm:completeness tableaux}
Let $A$ be a sentence of $\FOLP$. If $A$ is $\FOLP_\CS$-valid, then it has a $\FOLP_\CS$-tableau proof.
\end{theorem}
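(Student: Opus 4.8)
The plan is to argue by contraposition: assuming $A$ has no $\FOLP_\CS$-tableau proof, I would construct an $\FOLP_\CS$-model falsifying $A$. The first step is to note that, since $A$ is a sentence, $\neg A$ is a sentence and hence a closed $Par$-formula, so $\{\neg A\}$ is a legitimate (finite) set of closed $Par$-formulas. By definition an $\FOLP_\CS$-tableau proof of $A$ is a closed $\FOLP_\CS$-tableau beginning with $\neg A$; therefore the hypothesis that no such proof exists says precisely that $\{\neg A\}$ is tableau $\FOLP_\CS$-consistent.

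Next I would invoke the Henkin-construction lemma to extend $\{\neg A\}$ to an $E$-complete, maximally tableau $\FOLP_\CS$-consistent set $\Gamma$ of closed $Par$-formulas. From $\Gamma$ I form the canonical model $\M=(\D,\I,\E)$ with respect to $\Gamma$ and $\CS$ as in Definition~\ref{def:canonical model tableau}; by the lemma immediately preceding the Truth Lemma, $\M$ is an $\FOLP_\CS$-model, with non-empty domain $\D = Par$, and (as remarked after Definition~\ref{def:forcing relation}) it respects $\CS$.

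Finally, since $\neg A\in\Gamma$, clause (2) of the Truth Lemma gives $\M\not\Vdash A$. Hence $A$ is true in no — in particular not in all — $\FOLP_\CS$-models, so $A$ is not $\FOLP_\CS$-valid. Taking the contrapositive of this implication yields the theorem.

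I do not expect a genuine obstacle here: the substantive work has already been done in the preceding results — the extension to an $E$-complete maximally consistent $\Gamma$, its closure under the tableau rules (Lemma~\ref{lem:downward saturated}), the verification that the canonical $\E$ meets $\E1$--$\E6$, and the Truth Lemma (whose delicate case is $F = t:_X A$, where one must mediate between $Par(A)$ and $X$ using the $(Ctr)$ and $(Exp)$ rules). The completeness theorem is then a short assembly of these facts. The only point warranting a moment's attention is the one flagged above: that $\neg A$ is genuinely a closed $Par$-formula, so that both the consistency notion and the Truth Lemma apply to it; this holds exactly because $A$ is assumed to be a sentence.
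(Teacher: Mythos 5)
Your proposal is correct and follows essentially the same route as the paper: contraposition, tableau consistency of $\{\neg A\}$, Henkin extension to an $E$-complete maximally consistent $\Gamma$, the canonical model, and clause (2) of the Truth Lemma applied to $\neg A\in\Gamma$. The only difference is that you spell out the supporting observations (that $\neg A$ is a closed $Par$-formula and that the canonical model is a genuine $\FOLP_\CS$-model) which the paper leaves implicit.
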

\begin{proof}
If $A$ does not have a $\FOLP_\CS$-tableau proof, then $\{\neg A\}$ is a $\FOLP_\CS$-consistent set and can be extended to a tableau $\FOLP_\CS$-consistent, maximal and $E$-complete set $\Gamma$ of closed $Par$-formulas. Since $\neg A\in\Gamma$, by the Truth Lemma, $\M\not\Vdash A$, where $\M$ is the canonical model of $\FOLP_\CS$ with respect to $\Gamma$ and $\CS$. Therefore $A$ is not $\FOLP_\CS$-valid.\qed
\end{proof}

\noindent
{\bf Acknowledgments}\\

This research was in part supported by a grant from IPM. (No. 95030416)


\end{document}